%%%%%%%%%%%%%%%%%%%% author.tex %%%%%%%%%%%%%%%%%%%%%%%%%%%%%%%%%%%
%
% sample root file for your "contribution" to a contributed volume
%
% Use this file as a template for your own input.
%
%%%%%%%%%%%%%%%% Springer %%%%%%%%%%%%%%%%%%%%%%%%%%%%%%%%%%

% RECOMMENDED %%%%%%%%%%%%%%%%%%%%%%%%%%%%%%%%%%%%%%%%%%%%%%%%%%%
\documentclass[graybox]{svmult}

% choose options for [] as required from the list
% in the Reference Guide

\usepackage{type1cm}        % activate if the above 3 fonts are
                            % not available on your system
%
\usepackage{makeidx}         % allows index generation
\usepackage{graphicx}        % standard LaTeX graphics tool
                             % when including figure files
\usepackage{multicol}        % used for the two-column index
\usepackage[bottom]{footmisc}% places footnotes at page bottom

\usepackage{newtxtext}       % 
\usepackage{newtxmath}       % selects Times Roman as basic font

% see the list of further useful packages
% in the Reference Guide

\makeindex             % used for the subject index
                       % please use the style svind.ist with
                       % your makeindex program

%%%%%%%%%%%%%%%%%%%%%%%%%%%%%%%%%%%%%%%%%%%%%%%%%%%%%%%%%%%%%%%%%%%%%%%%%%%%%%%%%%%%%%%%%

\begin{document}

\title*{Generating from the Strauss Process using stitching}
% Use \titlerunning{Short Title} for an abbreviated version of
% your contribution title if the original one is too long
\author{Mark Huber}
% Use \authorrunning{Short Title} for an abbreviated version of
% your contribution title if the original one is too long
\institute{Mark Huber \at Claremont McKenna College, 850 Columbia AV, Claremont, CA  91711, \email{mhuber@cmc.edu}
}
%
% Use the package "url.sty" to avoid
% problems with special characters
% used in your e-mail or web address
%
\maketitle

\abstract{The Strauss process is a point process with unnormalized density with respect to a Poisson point process, where each pair of points within a specified distance $r$ of each other contributes a factor $\lambda \in (0, 1)$ to  the density.  Basic Acceptance Rejection works spectacularly poorly for this problem, which is why several other perfect simulation methods have been developed.  These methods, however, also work poorly for reasonably large values of $\lambda$.  \emph{Acceptance Rejection Stitching} is a new method that works much faster, allowing the simulation of point processes with values of $\lambda$ much larger than ever before.
}

\section{Introduction}
\label{SEC:intro}

The \emph{Strauss process} is a point process that has a penalized density with respect to an underlying Poisson point process.  Given a point space $S \subset \mathbb{R}^n$ of finite Lebesgue measure, say that the random variable $X \subset S$ is a \emph{point process} if it contains a finite number of points with probability 1.  Write $X = \{x_1, \ldots, x_n\}$ if it contains $n$ points.

A random point process $X$ is a \emph{Poisson point process of rate $\lambda$} if the number of points in $X$ has a Poisson distribution with mean equal to $\lambda$ times the Lebesgue measure of $S$, and given the number of points in $X$, each is uniformly distributed over $S$.  The method to be given works on Poisson point processes with more general rate functions, but for simplicity of presentation the rate will be assumed to be a constant $\lambda$ here.

For a point process $X \subset S$ and positive constant $r$, let $c_r(X)$ be the number of pairs of distinct points in $X$ that are at most distance $r$ apart.  For $\gamma \in [0, 1]$, let 
\begin{equation}
f_{\gamma, r}(x) = \gamma^{c_r(X)}.
\end{equation}

A point process with (unnormalized) density $f_{\gamma, r}$ with respect to the underlying measure that is a Poisson point process with rate $\lambda$ over $S$ is a Strauss process~\cite{strauss1975}.  Because $\gamma \leq 1$, this density penalizes point process that have many points within distance $r$ of each other.  This density can also be written as a product.
\begin{equation}
f(x) = \prod_{\{x_i, x_j\} \subseteq x} [\gamma \cdot \mathbb{I}(\text{dist}(x_i, x_j) \leq r) + 1 \cdot \mathbb{I}(\text{dist}(x_i, x_j) > r)].
\end{equation}
Here $\mathbb{I}$ is the usual indicator function that evaluates to 1 if the argument is true and is 0 otherwise.

Say that a density is a \emph{penalty factor density} if it consists of factors each of which is at most 1.  For such a density, the \emph{acceptance-rejection} (AR) method can be used to generate samples exactly from the target distribution.  Generate a sample from the original distribution.  Then, for each factor in the density, accept the result with probability equal to the factor.  If every factor is accepted, accept the overall sample as coming from the density.  Otherwise, start the process over.

\begin{programcode}{$\texttt{AR-Strauss}(S, \gamma, r, \lambda)$}
  
  1. Draw $X$ as a Poisson random variable with rate $\lambda$ over $S$.  Say $X$ has $n$ points.
  
  2. For each $1 \leq i < j \leq n$, generate $U_{i, j}$ uniform over $[0, 1]$.
  
  3. If for all $1 \leq i < j \leq n$,
\[
U_{i, j} \leq [\gamma \cdot \mathbb{I}(\text{dist}(x_i, x_j) \leq r) + 1 \cdot \mathbb{I}(\text{dist}(x_i, x_j) > r)],
\]
then return $X$.

  4. Else, let $Y$ be the output of a call to $\texttt{AR-Strauss}(S, \gamma, r, \lambda)$.  Return $Y$.
\end{programcode}

This method was the first \emph{perfect simulation} method for generating exactly from the Strauss process.  The general AR protocol goes back to~\cite{vonneumann1951}.  More recently, other perfect simulation methods for the Strauss process have been developed.  These include:
\begin{itemize}
  \item
    Dominated coupling from the past (DCFTP)~\cite{kendall1995, kendallt1999, kendallm2000}.
  \item Birth-death-swap with bounding chains (BDS)~\cite{huber2012a}.
  \item Partial rejection sampling (PRS)~\cite{jerrumg2019} (when $\gamma = 0$).
\end{itemize}
    
See~\cite{huber2011b} and~\cite{huber2015b} for more detail and the theory underlying these methods.  In particular, DCFTP, BDS, and PRS all rely on the process being \emph{locally stable}.  A density $f$ is locally stable if for any set of points $x$ and any point $a$ in $S$, there is a constant $K$ such that $f(x \cup \{a\}) \leq K f(x)$ (see~\cite{kendallm2000}.)  Approaches that require local stability will be referred to as \emph{local methods}.

The running time of AR tends to be exponential in $\lambda$ and the size of the point space $S$.  The running time of local methods tend to be polynomial in the size of $S$ when $\lambda$ lies below a certain threshold (the critical value) and then exponential above that threshold.  This makes generating from the Strauss process difficult for high values of $\lambda$.  

In this work, a new method for generating from the Strauss process is presented.  Like generic AR the new method has an exponential running time in $\lambda$, but the rate of exponential growth is much smaller in the size of the point space $S$.  Therefore, the rate of the exponential is much lower than both AR and local methods past their critical point.  

The result is an algorithm that allows generation of Strauss processes over $(\lambda, S)$ pairs that were computationally infeasible before.  For instance, Figure~\ref{FIG:strauss} illustrates such a process with $\lambda = 200$, $\gamma = 0$, and $r = 0.15$.

\begin{figure}[b]
  \sidecaption
  \includegraphics[scale=0.75]{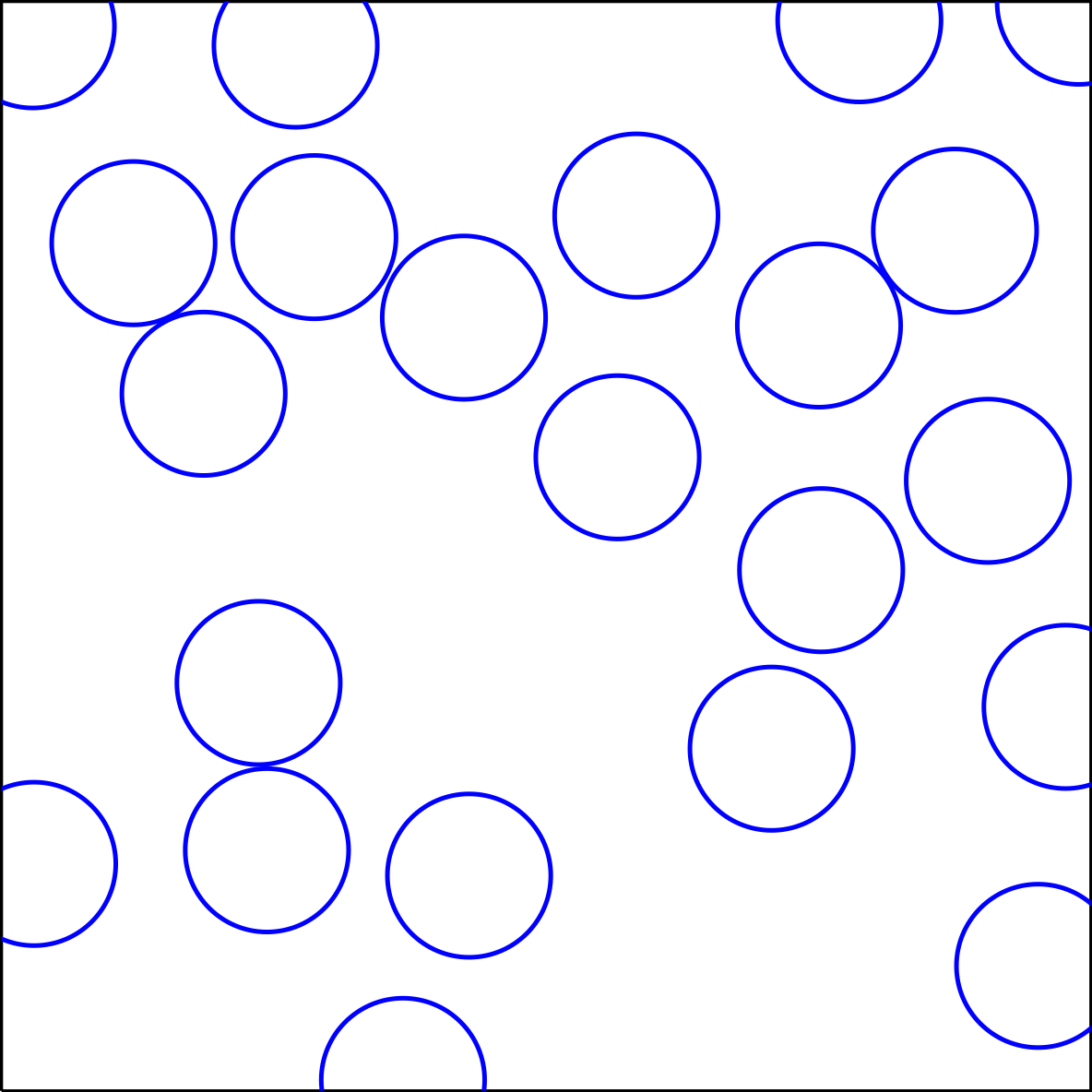}
  \caption{Strauss process over $S = [0, 1] \times [0, 1]$ with $\gamma = 0$, $\lambda = 200$, and $r = 0.15$.}
  \label{FIG:strauss}
\end{figure}

The rest of the paper is organized as follows.  The next section presents the new stitching algorithm, and presents results on correctness and running time.  Section 4 then gives numerical results on the running time.  Section 5 then concludes.

\section{Acceptance rejection and stitching}

Given an unnormalized penalty density $h_S \leq 1$ with underlying measure $\mu$, consider the general $\texttt{AR}$ algorithm for a point process where the points must lie in $S$.  This method begins by drawing a point process $X$ from the reference measure $\mu$.  Then with probability $h_S(X)$, $X$ is accepted as coming from density $h_S$ with respect to $\mu$.

Otherwise, the point process is rejected.  If rejection occurs, then recursion is used and the acceptance rejection algorithm calls itself to find the sample.  Of course, for practicality a while or repeat loop is used instead of recursive calls.  Here the algorithm is presented recursively, as this is a hallmark of perfect simultion algorithms, and gives a preview to later methods.

\begin{programcode}{$\texttt{AR}(h, \mu, S)$}

1. Draw $X$ from $\mu$ for point space $S$.

2. Draw $U$ uniformly from $[0, 1]$.

3. If $U \leq h(X)$, then return $X$.

4. Else, let $Y$ be the output of a recursive call to $\texttt{AR}(h, \mu, S)$.  Return $Y$.

\end{programcode}

For density $h$ and point space $S$, let $h_S(x) = h(x \cap S)$ be the density restricted to only consider points in $S$.

\begin{lemma}
Let $Z_h$ be the integral of $h_S$ with respect to $\mu$ with points in $S$.  If $Z_h$ is greater than zero, then the output of $\texttt{AR}(h, \mu, S)$ has density $h_S$ with respect to measure $\mu$ with points in $S$.  The number of times \texttt{AR} is called is geometrically distributed with mean $1 / Z_h$.
\end{lemma}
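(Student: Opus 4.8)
The plan is to analyze a single execution of $\texttt{AR}$ by conditioning on whether the test in Step~3 succeeds, and then to exploit the fact that the recursive call in Step~4 has, by construction, exactly the same law as the original call. This turns the self-referential algorithm into a fixed-point equation for the law of the output, which can be solved explicitly.

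First I would compute the probability that a single round accepts. Since $X$ is drawn from $\mu$ and, conditional on $X$, the test $U \leq h(X)$ succeeds with probability $h_S(X)$ (using that $h_S \leq 1$ because $h$ is a penalty density, so $h_S(X)$ is a legitimate probability), the unconditional acceptance probability is
\begin{equation}
\Pr(U \leq h(X)) = \int h_S \, d\mu = Z_h.
\end{equation}
The same computation restricted to a measurable set $A$ of configurations with points in $S$ gives $\Pr(X \in A,\ U \leq h(X)) = \int_A h_S \, d\mu$.

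Next I would set up and solve the fixed-point equation. Let $W$ be the output of $\texttt{AR}(h,\mu,S)$ and write $p(A) = \Pr(W \in A)$. Conditioning on Step~3, with probability $Z_h$ we accept and return $X$, whose conditional law assigns mass $\int_A h_S\,d\mu / Z_h$ to $A$; with probability $1 - Z_h$ we reject and return the output of the recursive call, which is an independent execution of the same routine and hence has the same law as $W$. This yields
\begin{equation}
p(A) = \int_A h_S \, d\mu + (1 - Z_h)\, p(A).
\end{equation}
Because $Z_h > 0$, this solves uniquely to $p(A) = \int_A h_S\,d\mu \,/\, Z_h$, which is exactly the claim that $W$ has (normalized) density $h_S / Z_h$, i.e. unnormalized density $h_S$ in the sense used throughout the paper, with respect to $\mu$. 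For the count, the identical conditioning shows the number of calls $N$ satisfies $N = 1$ with probability $Z_h$ and $N = 1 + N'$ otherwise, with $N'$ an independent copy of $N$; this is the defining recursion of a geometric random variable with success probability $Z_h$, so $\mathbb{E}[N] = 1/Z_h$.

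The step I expect to require the most care is justifying that the self-referential appeal — ``the recursive call has the same law as $W$'' — is actually legitimate, so that $W$ is a well-defined random variable and $p(A)$ is genuinely determined rather than indeterminate. This hinges squarely on the hypothesis $Z_h > 0$: it forces the geometric number of rounds to be finite almost surely, so the recursion terminates and $W$ is honest, and it is precisely the condition that makes the displayed fixed-point equation invertible with a unique solution. Everything else is a routine conditioning argument.
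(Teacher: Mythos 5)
Your proposal is correct, and the acceptance-probability computation $\Pr(U \leq h(X)) = \int h_S\,d\mu = Z_h$ is identical to the paper's. Where you diverge is in how the recursion is handled. The paper invokes the Fundamental Theorem of Perfect Simulation: it first establishes termination with probability 1 (the geometric argument you also give), then forms the modified algorithm $\mathcal{A}'$ in which the recursive call is replaced by an \emph{oracle drawing from the target distribution}, and verifies that $\mathcal{A}'$ outputs the target law --- i.e., it checks that the target is a fixed point of the recursion and lets the FTPS convert local correctness into global correctness. You instead treat the law $p(A)$ of the actual output $W$ as an unknown, use the fact that the recursive call is an independent copy of the whole procedure to derive the fixed-point equation $p(A) = \int_A h_S\,d\mu + (1-Z_h)p(A)$, and solve it uniquely using $Z_h > 0$. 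Your route is more self-contained (no appeal to FTPS as a black box) and correctly identifies the one delicate point --- that $W$ must be shown to be a well-defined random variable before one may assert the sub-call has "the same law as $W$" --- which is exactly the role termination plays; the paper's route buys generality, since the same two-condition template (terminate a.s., verify local correctness against an oracle) is reused verbatim for \texttt{AR-split-once} and \texttt{AR-stitch} later in the paper, where solving the fixed-point equation directly would be messier. Both arguments hinge on $Z_h > 0$ in the same two places: finiteness of the geometric number of rounds, and invertibility of the resulting equation.
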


The proof uses \emph{The Fundamental Theorem of Perfect Simulation} (FTPS)~\cite{huber2015b} which gives two conditions under which the output of a probabilistic recursive algorithm $\mathcal{A}$ comes from a target distribution.  The first condition is that $\mathcal{A}$ must terminate with probability 1.

Now consider an algorithm $\mathcal{A'}$ where the recursive calls is $\mathcal{A}$ are replaced with oracles that generate from the correct distribution.  If $\mathcal{A'}$ has output that provably comes from the correct distribution, say that $\mathcal{A'}$ is \emph{locally correct}.  The second condition in the FTPS is that $\mathcal{A'}$ is locally correct.

\begin{proof}
First consider the probability that the algorithm accepts.  
\begin{align*}
\mathbb{P}(U \leq h_S(X)) &= \mathbb{E}[\mathbb{I}(U \leq h_S(X))] \\
&= \mathbb{E}[\mathbb{E}[\mathbb{I}(U \leq h_S(X)) | X]] \\
&= \mathbb{E}[h_S(X)] \\
&= \int  h_S(x) \ d\mu(x) = Z_h.
\end{align*}
By assumption this integral value $Z_h$ is greater than zero.  Hence the number of times the algorithm generates $X$ is a geometric random variable with a positive parameter, and so is finite with probability 1.

Now consider algorithm $\mathcal{A'}$, where in the last line the recursive call for $Y$ is replaced by an oracle.  Let $W$ be the output of $\mathcal{A'}$.  Then for any measurable set $A$, 
\begin{align*}
\mathbb{P}(W \in A) &= \mathbb{P}(X \in A, U \leq h_S(X)) + \mathbb{P}(U > h_S(X), Y \in A) \\
&= \frac{Z_h}{Z_h} \int_{x \in A} h_S(x) \ d\mu(x) + \mathbb{P}(U > h_S(X)) \mathbb{P}(Y \in A) \\
&= Z_h \mathbb{P}(Y \in A) + (1 - Z_h) \mathbb(Y \in A) = \mathbb{P}(Y \in A). 
\end{align*}
Therefore $\mathcal{A'}$ has the correct output distribution, making it locally correct.  Since the algorithm also terminates in finite time with probability 1, by the FTPS the original algorithm is also correct.
\end{proof}

Now suppose for $(S_1, S_2)$ a partition of $S$ that our target density can be factored into three parts.  The first part only depends on points in $S_1$, the second part only depends on points in $S_2$, and the third part only depends on interactions between a point in $S_1$ and a point in $S_2$.  That is, 
\[
h_S(x) = h_{S_1}(x \cap S_1) h_{S_2}(x \cap S_2) h_{S_1, S_2}(x),
\]
where $h_{S_1, S_2}(x)$ is also a penalty density.  Then it is possible to use the partition to create a faster algorithm.  Use \texttt{AR} to find samples from each half of the partition.  Then accept the combined result as a draw from the target distribution.

\begin{programcode}{$\texttt{AR-split-once}(h, \mu, S)$}

1. Partition $S$ into $(S_1, S_2)$.

2. Draw $X_1$ using $\texttt{AR}(h, \mu, S_1)$, draw $X_2$ using $\texttt{AR}(h, \mu, S_2)$.

3. Draw $U$ uniformly from $[0, 1]$.

4. If $U \leq h_{S_1, S_2}(X_1 \cup X_2)$ then return $X_1 \cup X_2$.

5. Else draw $Y$ from $\texttt{AR-split-once}(h, \mu, S)$.

\end{programcode}

\begin{lemma}
Let $Z_h$ be the integral of $h_S$ with respect to $\mu$ with points in $S$.  If $Z_h > 0$, then the output of $\texttt{AR-split}(h, \mu, S)$ has density $h_S$ with respect to measure $\mu$ with points in $S$.
\end{lemma}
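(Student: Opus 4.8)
The plan is to invoke the Fundamental Theorem of Perfect Simulation exactly as in the proof of the first lemma, so the two things to establish are that $\texttt{AR-split-once}$ terminates with probability $1$ and that it is locally correct. Before either step, I would first record the structural fact that makes the two independent sub-calls legitimate: because $\mu$ is a Poisson point process and $(S_1,S_2)$ partitions $S$, the restrictions of $\mu$ to $S_1$ and to $S_2$ are independent, so drawing $X_1$ and $X_2$ separately and forming $X_1\cup X_2$ reproduces a draw from $\mu$ on $S$. I would also note that the restricted density fed to each sub-call agrees with the corresponding factor in the factorization (a configuration living entirely in $S_1$ has no points in $S_2$ and no cross pairs, so $h_S$ reduces to $h_{S_1}$ there, and symmetrically for $S_2$), so by the first lemma $X_1$ has density $h_{S_1}/Z_1$ and $X_2$ has density $h_{S_2}/Z_2$, where $Z_1 = \int_{S_1} h_{S_1}\,d\mu$ and $Z_2 = \int_{S_2} h_{S_2}\,d\mu$. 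Since $h_{S_2}$ and $h_{S_1,S_2}$ are penalty factors bounded by $1$, positivity of $Z_h$ forces $Z_1 > 0$ and likewise $Z_2 > 0$, which is precisely the hypothesis needed to apply the first lemma to each sub-call.

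For termination, I would compute the acceptance probability in step 4. Conditioning on the independent draws $X_1, X_2$ and integrating out $U$ gives
\[
\mathbb{P}(U \le h_{S_1,S_2}(X_1\cup X_2)) = \int_{S_1}\int_{S_2} \frac{h_{S_1}(x_1)}{Z_1}\,\frac{h_{S_2}(x_2)}{Z_2}\, h_{S_1,S_2}(x_1\cup x_2)\,d\mu(x_2)\,d\mu(x_1) = \frac{Z_h}{Z_1 Z_2},
\]
using the factorization $h_S = h_{S_1}h_{S_2}h_{S_1,S_2}$ together with the product structure of $\mu$. Because $Z_h>0$ and $Z_1,Z_2\in(0,\infty)$, this probability is strictly positive, so the number of passes through $\texttt{AR-split-once}$ is geometric with finite mean and the algorithm halts with probability $1$.

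For local correctness, I would replace the recursive call in step 5 by an oracle $Y$ of density $h_S/Z_h$ and compute the law of the output $W$ on an arbitrary measurable set $A$. Splitting on whether step 4 accepts,
\[
\mathbb{P}(W\in A) = \frac{1}{Z_1 Z_2}\int_{x\in A} h_S(x)\,d\mu(x) + \left(1 - \frac{Z_h}{Z_1 Z_2}\right)\mathbb{P}(Y\in A),
\]
where the first term uses the same factorization computation restricted to $A$ and the second uses the rejection probability from the termination step. Substituting $\mathbb{P}(Y\in A) = Z_h^{-1}\int_{x\in A} h_S\,d\mu$ makes the two contributions telescope to $Z_h^{-1}\int_{x\in A} h_S\,d\mu = \mathbb{P}(Y\in A)$, so $W$ has the target density and the algorithm is locally correct. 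With both FTPS conditions in hand the lemma follows.

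The step I expect to be the main obstacle is not the algebra, which mirrors the first lemma, but the measure-theoretic bookkeeping in the first paragraph: justifying that the Poisson reference measure genuinely factors as a product over the partition, and that the densities handed to the recursive calls coincide with the factors $h_{S_1}$ and $h_{S_2}$ rather than with some other restriction of $h_S$. Getting this identification exactly right is what licenses writing the joint law of $(X_1,X_2)$ as a product and hence collapsing the double integral to $Z_h$.
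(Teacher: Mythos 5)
Your proposal is correct and follows essentially the same route as the paper's proof: verify termination via a positive acceptance probability, then establish local correctness by replacing the recursive call with an oracle and telescoping the accept/reject contributions, invoking the Fundamental Theorem of Perfect Simulation. The only difference is that you track the normalizing constants $Z_1$ and $Z_2$ of the sub-calls explicitly (obtaining acceptance probability $Z_h/(Z_1 Z_2)$ rather than the paper's unnormalized $Z_h$), which is if anything a more careful rendering of the same argument.
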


\begin{proof}
Let $X_1$ be a draw from $\mu$ over $S_1$ and $X_2$ a draw from $\mu$ over $S_2$.  For $U_1, U_2, U$ independent uniforms over $[0, 1]$, let
\begin{align*}
p_1 &= \mathbb{P}(U_1 \leq h_{S_1}(X_1)) \\
p_2 &= \mathbb{P}(U_2 \leq h_{S_2}(X_2)) \\
p_3 &= \mathbb{P}(U \leq h_{S_1, S_2}(X_1, X_2) \mid U_1 \leq h_{S_1}(X_1), U_2 \leq h_{S_2}(X_2))).
\end{align*}

Then the chance of accepting $X$ as a draw from $h$ in line 4 is $p_1 p_2 p_3 = Z_h > 0$.  Hence $p_1$, $p_2$, and $p_3$ are positive, which means the calls to \texttt{AR} take on average a finite number of steps.  The number of calls to \texttt{AR-split-once} will on average be $1 / p_3$.  Taken together, this means that the algorithm terminates with probability 1 in finite time.

Now consider the output of $\mathcal{A'}$, where recursive calls are replaced with oracles.  Then $(X_1, X_2)$ has density $f_{(X_1, X_2)}(x_1, x_2) = h_{S_1}(x_1) h_{S_2}(x_2)$, and $Y$ from line 5 will have density $h(x)$.  Let $s$ be the event that acceptance occurs in line 5, that is
\begin{equation}
s = \left\{ U \leq h_{S_1, S_2}(X_1, X_2) \right\}.
\end{equation}

Let $p$ be the probability of $s$, then 
\begin{align*}
  p &= \mathbb{P}(U \leq h_{S_1,S_2}(X_1, X_2)) \\
  &= \int_{(x_1, x_2)} h_{S_1}(x_1)h_{S_2}(x_2)h_{S_1, S_2}(x_1, x_2) d[\mu(x_1) \times \mu(x_2)] \\
  &= Z_h.
\end{align*}
Let $W$ be the output of the algorithm.  Then for any measurable set $A$,
\begin{equation}
  \mathbb{P}(W \in A) = \mathbb{P}(X_1 \cup X_2 \in A, s) + \mathbb{P}(\neg s, Y \in A).
\end{equation}

The first term is the probability of accepting a draw that happens to fall in $A$, and the second term is the probability of not accepting and the recursive call generating output that lies in the target set $A$.  Since $s$ and $Y \in A$ are independent events and $\mathbb{P}(\neg s) = 1 - Z_h$,
\begin{equation}
  \mathbb{P}(W \in A) = \mathbb{P}(X_1 \cup X_2 \in A, s) + (1 - Z_h) \mathbb{P}(Y \in A).
\end{equation}

Further,
\begin{align*}
  \mathbb{P}(X_1 \cup X_2 \in A, s) &= \int_{(x_1, x_2) \in A} h_{S_1}(x_1)h_{S_2}(x_2) h_{S_1, S_2}(x_1, x_2) \ d[\mu(x_1) \times \mu(x_2)] \\
  &= Z_h \int_{(x_1, x_2) \in A} h_{S_1}(x_1)h_{S_2}(x_2) h_{S_1, S_2}(x_1, x_2) / Z_h \ d[\mu(x_1) \times \mu(x_2)] \\
  &= Z_h \mathbb{P}(Y \in A).
\end{align*}

Therefore $\mathbb{P}(W \in A) = Z_h \mathbb{P}(Y \in A) + (1 - Z_h) \mathbb{P}(Y \in A) = \mathbb{P}(Y \in A)$, and $\mathcal{A'}$ has the correct output distribution.  By the FTPS so does $\texttt{AR-split-once}.$
\end{proof}

\subsection{When to split more than once}

Using \texttt{AR}, the probability $p$ of acceptance is $p = p_1 p_2 p_3$.  So the expected number of times the random variable $X$ is sampled in $\texttt{AR}$ is 
\[
\frac{1}{p_1} \cdot \frac{1}{p_2} \cdot \frac{1}{p_3}.
\]

What is the running time of \texttt{AR-split-once}?  The call to $\texttt{AR-Split}(h, \mu, S_1)$ uses $1 / p_1$ draws on average from $X$, and the call to $\texttt{AR-Split}(h, \mu, S_2)$ uses $1 / p_2$.  These calls are repeated an average of $1 / p_3$ times.  Hence the expected number of times $X$ is sampled in $\texttt{AR-split}$ is 
\[
\left[\frac{1}{p_1} + \frac{1}{p_2} \right] \frac{1}{p_3}.
\]

Adding rather than multiplying $1 / p_1$ and $1 / p_2$ gives the speedup.  Also, recursion instead of AR should be used whenever $p_1 + p_2 > 1$.  This gives rise to the \emph{stitching} algorithm, which uses recursion as much as possible, in an adapted manner.

\begin{programcode}{$\texttt{AR-stitch}(h, \mu, S)$}

1. Draw $Z$ from $\mu$ with point space $S$, and independently draw $U_1$ uniform over $[0,1]$.  If $U_1 \leq h_S(Z)$, then return $Z$ and quit.

2. Partition $S$ into $(S_1, S_2)$.

3. Draw $X_1$ using $\texttt{AR-stitch}(h, \mu, S_1)$, draw $X_2$ using $\texttt{AR-stitch}(h, \mu, S_2)$.

4. Draw $U_2$ uniformly from $[0, 1]$.

5. If $U_2 \leq h_{S_1, S_2}(X_1 \cup X_2)$ then return $X_1 \cup X_2$.

6. Else draw $Y$ from $\texttt{AR-stitch}(h, \mu, S)$.

\end{programcode}

The first result is that this procedure terminates in finite time with probability 1 if $Z_h > 0$, no matter how the partitioning is done.

\begin{lemma}
Let $Z_h$ be the integral of $h_S$ with respect to $\mu$ with points in $S$.  If $Z_h > 0$, then $\texttt{AR-stitch}(h, \mu, S)$ terminates in finite time with probability 1 regardless of the choice of partition at line 2.
\end{lemma}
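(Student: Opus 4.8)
The plan is to reduce the statement to showing that $t(T) := \mathbb{P}(\texttt{AR-stitch}(h,\mu,T)\text{ halts in finite time})$ equals $1$ for every measurable $T\subseteq S$. Because a halting execution has a finite recursion tree, hence a finite maximal partition-depth, the truncations $p_n(T) := \mathbb{P}(\texttt{AR-stitch}(h,\mu,T)\text{ halts with all line-3 recursion at partition-depth at most }n)$ increase to $t(T)$. Conditioning on the first pass through the body and summing the geometric series produced by the retry in line 6 gives the one-step recursion
\[
p_{n+1}(T)=\frac{a+(1-a)\,q\,c}{1-(1-a)(1-c)\,q},\qquad q=p_n(T_1)\,p_n(T_2),
\]
where $a=Z_h(T)$ is the line-1 acceptance probability and $c=c(T)$ the line-5 acceptance probability. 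The immediate observation is that this map sends $q=1$ to $1$, so that \emph{if both children halt almost surely then so does the parent}; termination therefore propagates up the partition tree exactly as in the \texttt{AR-split-once} lemma, and the entire difficulty is to supply a base for this propagation when the partition scheme never stops refining.

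Two uniform estimates drive the argument. First, the empty configuration has $h(\emptyset)=1$ and reference mass $e^{-\lambda|T|}$ (writing $|T|$ for the Lebesgue measure of $T$), so
\[
a=Z_h(T)\ge e^{-\lambda|T|}\ge e^{-\lambda|S|}=:\delta>0
\]
for every $T\subseteq S$. In particular each call halts at line 1 with probability at least $\delta$ on fresh randomness, so the number of line-6 retries at any fixed region is dominated by a $\mathrm{Geometric}(\delta)$ variable and is almost surely finite. Second, a line-5 rejection needs a pair within distance $r$ having one endpoint in each part; since the penalized process has intensity at most $\lambda$, a Markov bound gives
\[
1-c(T)\le C\,\min(|T_1|,|T_2|),\qquad C:=\lambda^{2}|B_r|,
\]
with $|B_r|$ the volume of a radius-$r$ ball. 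This is the tension the adversary cannot avoid: forcing $c$ small, so that retries multiply, requires \emph{both} parts to be substantial.

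I would then prove $\beta_n(v):=\inf\{p_n(T):|T|\le v\}\to 1$ by a scale-indexed analysis of the two regimes. For a thin split ($|T_1|$ small) the second estimate makes $c\approx 1$, the recursion collapses to $p_{n+1}(T)\approx a+(1-a)\,p_n(T)$, which contracts toward the fixed point $1$ at rate $1-a\le 1-\delta$. For a balanced split the two children live at scale $|T|/2$, where $\beta_n$ is already closer to $1$, so $q$ is near $1$ and the map again returns a value near $1$; iterating the halving reaches scales on which $a\to 1$ and the bound is trivial. Every split interpolates between these extremes, and the intended conclusion is that $\beta_n(v)\uparrow 1$ for all $v\le|S|$, whence $t(T)=\lim_n p_n(T)=1$.

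The main obstacle is exactly this last step: controlling \emph{all} split ratios at once and interchanging the two limits (truncation depth $n\to\infty$ and the induction over scales $v$). A crude worst case — replacing each child's probability by the global infimum $\beta_n$ — provably fails, since with $a$ as small as $\delta$ and $c$ as small as allowed the recursion acquires a spurious fixed point near $\delta$; one must genuinely use that at least one child is strictly smaller, tied through the second estimate to the smaller part's measure, and one must also check that conditioning on halting-within-depth does not spoil the intensity bound behind that estimate. I expect the cleanest packaging to be either a monotone scale-induction yielding a uniform lower map $\beta_{n+1}(v)\ge\Phi_v(\beta_n(v))$ whose only fixed point in range is $1$, or an equivalent supermartingale $\sum e^{\theta\lambda|T|}$ over the nodes at a fixed partition-depth (with $\theta$ large) shown to contract by a factor below $1$ per generation; both reduce to the same elementary but delicate optimization over the split $|T_1|+|T_2|=|T|$.
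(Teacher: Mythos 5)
Your proposal has a genuine gap, and you candidly identify it yourself: the final step --- the uniform scale-induction showing $\beta_n(v)\uparrow 1$ while controlling all split ratios simultaneously and interchanging the depth limit with the induction over scales --- is exactly the heart of the lemma, and it is left unproven. You correctly observe that the crude bound (replacing each child's halting probability by a global infimum) creates a spurious fixed point, but you do not supply the mechanism that rules it out. The paper's proof supplies precisely that mechanism, and it is purely probabilistic rather than geometric: writing $p = Z_{h_S}$ for the line-1 acceptance probability, the factorization $p = p_1 p_2 p_3$ (with $p_1, p_2$ the acceptance probabilities of the two children and $p_3$ the stitching acceptance probability, each factor at most $1$) forces $p_1, p_2, p_3 \geq p$, so the children's acceptance probabilities can only improve as one descends the recursion, and it is impossible for both $p_1$ and $p_2$ to be small at once. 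The paper then bounds $r(p)$, the expected number of reference draws, via the recursion $r(p) = 1 + (1-p)[r(p_1) + r(p_2) + (1-p_3)r(p)]$ and an induction on the level $i$ for which $p \geq (3/4)(1-p)^i$, splitting into cases according to which of $p_1, p_2, p_3$ is small; finiteness of the expectation then gives almost sure termination. This is the structural fact your sketch is missing.

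A secondary but real problem: your two driving estimates ($Z_h(T) \geq e^{-\lambda|T|}$ from the empty configuration, and $1 - c(T) \leq \lambda^2 |B_r| \min(|T_1|,|T_2|)$ from an intensity bound on cross-pairs) are specific to the Strauss process over a Poisson reference measure. The lemma is stated for an arbitrary penalty density $h$ and reference measure $\mu$ with the sole hypothesis $Z_h > 0$; there is no Lebesgue measure, no intensity $\lambda$, and no guarantee that $h(\emptyset) = 1$ or that the reference measure assigns mass $e^{-\lambda|T|}$ to the empty set. Even if your scale induction were completed, it would prove a Strauss-specific statement, not the lemma as stated. The paper's argument, by contrast, uses only $Z_h > 0$ and the multiplicative structure of the acceptance probabilities, which is why it covers arbitrary partitioning schemes and arbitrary penalty densities.
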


\begin{proof}
Let $r(p)$ be the supremum over the expected number of times $X$ is generated over all choices of $S_1, S_2$, and $h$ when the probability $X$ is accepted in line 1 is $p$.  Our goal will be to bound $r(p)$ in terms of $p > 0$.

Let $p_1$ be the probability that $X$ is accepted in the recursive call over $S_1$, $p_2$ the acceptance probability over $S_2$, and $p_3$ the probability that $(X_1, X_2)$ is accepted in line 4.  

As seen earlier, $p = p_1 p_2 p_3$.  There is always at least one draw of $X$ in any call, followed by a $1 - p$ chance of two recursive calls, followed by a $1 - p_3$ chance of a third recursive call.  Hence
\begin{equation}
r(p) = 1 + (1 - p)[r(p_1) + r(p_2) + (1 - p_3)r(p)].
\end{equation}

This holds for all $p' \geq p$, so letting $w = \sup_{p' \in [p, 1]} r(p)$ gives
\begin{equation}
w \leq 1 + (1 - p)[3w].
\end{equation}
An easy calculation then gives for $p \geq 3 / 4$, $r(p) \leq w \leq 4$.

This forms the base case for an induction proof of the following fact:  For all $i \in \{0, 1, 2, \ldots\}$, if  $p \geq (3 / 4)(1 - p)^i$, then $r(p)$ is finite.

Consider the induction step:  suppose for all $p \geq (3 / 4)(1 - p)^i$, there is finite $M$ such that $r(p) \leq M$.  Consider $i + 1$, and assume $p \geq (3 / 4)(1 - p)^{i + 1}$.

If $p_1 \geq (3 / 4)(1 - p)^i$ and $p_2 \geq (3 / 4)(1 - p)^i$, then
\begin{equation}
  r(p) \leq 1 + (1 - p)[M + M + (1 - p_3)r(p)],
\end{equation}
and $r(p) \leq (1 + (1 - p)2M) / (p + p_3 - p p_3)$, completing the induction in this case.

Note that if $p_3 < 1 - p$, then $p_1 > p / p_3 = (3 / 4)(1 - p)^i$ and $p_2 > p / p_3 = (3 / 4)(1 - p)^i$ and so the induction step also holds in this case.

It cannot hold that both $p_1$ and $p_2$ are less than $(3 / 4)(1 - p)^i$, as that would make $p < (9 / 16)(1 - p)^{2i} < (3 / 4)(1 - p)^{i + 1}$.  Hence the only case that remains to consider is if $p_3 > 1 - p$ and exactly one of $p_1$ or $p_2$ (say $p_1$ without loss of generality) is less than $(3 / 4)(1 - p)^i$.

If $p_3 > 1 - p$, then $(1 - p_3) < p$, and by the induction hypothesis
\begin{equation}
  r(p) \leq 1 + (1 - p)[r(p) + M + p r(p)],
\end{equation}
which gives
$r(p) \leq (1 + (1 - p)M) / p^2$, completing the induction.

Since $p > 0$, there is some $i$ such that $p \geq (3 / 4)(1 - p)^i$, and so $r(p)$ is finite for all $p > 0$.
\end{proof}

\begin{lemma}
Let $Z_h$ be the integral of $h_S$ with respect to  $\mu$ for points in $S$.  If $Z_h > 0$, then $\texttt{AR-stitch}(h, \mu, S)$ terminates in finite time with probability 1 with output distributed as unnormalized density $h_S$ with respect to $\mu$ over $S$.
\end{lemma}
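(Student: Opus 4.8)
The plan is to invoke the Fundamental Theorem of Perfect Simulation, exactly as in the two preceding lemmas. The termination half of the FTPS hypothesis is already established by the previous lemma, which shows that $\texttt{AR-stitch}(h, \mu, S)$ halts in finite time with probability $1$ whenever $Z_h > 0$. So all that remains is to verify local correctness: form the modified algorithm $\mathcal{A}'$ in which each of the three recursive calls---the two calls in line~3 over $S_1$ and $S_2$, and the call in line~6 over $S$---is replaced by an oracle returning a sample from the appropriate target, and show that the output $W$ of $\mathcal{A}'$ has normalized density $h_S / Z_h$ with respect to $\mu$ over $S$ (equivalently, unnormalized density $h_S$).

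First I would identify the three disjoint ways $W$ can be produced and argue that each one individually yields the target law. Writing $Z_1 = \int_{S_1} h_{S_1} \, d\mu$ and $Z_2 = \int_{S_2} h_{S_2} \, d\mu$ (both positive since $0 < Z_h \leq Z_1 Z_2$), the oracle over $S_1$ returns $X_1$ with normalized density $h_{S_1}/Z_1$ and the oracle over $S_2$ returns $X_2$ with normalized density $h_{S_2}/Z_2$, while the oracle in line~6 returns $Y$ with the target density $h_S/Z_h$ directly. Line~1 accepts $Z$ with probability $Z_h$, and conditioned on acceptance $Z$ has density $h_S / Z_h$, exactly as computed in the first lemma.

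The crux is the line-5 branch. Here I would compute the probability of accepting $X_1 \cup X_2$ in line~5 by integrating $h_{S_1, S_2}$ against the product of the two oracle densities, using the factorization $h_S(x) = h_{S_1}(x \cap S_1) h_{S_2}(x \cap S_2) h_{S_1, S_2}(x)$ together with the fact that integrating over $S$ with respect to $\mu$ coincides with integrating over $S_1 \times S_2$ with respect to $\mu \times \mu$. This gives acceptance probability $Z_h / (Z_1 Z_2)$, and---this is the key point---the conditional density of $X_1 \cup X_2$ given acceptance is
\[
\frac{(h_{S_1}(x_1)/Z_1)(h_{S_2}(x_2)/Z_2)\, h_{S_1, S_2}(x_1 \cup x_2)}{Z_h/(Z_1 Z_2)} = \frac{h_S(x)}{Z_h},
\]
so the extraneous constants $Z_1$ and $Z_2$ cancel and the line-5 output is again distributed according to the target. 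Thus, conditioned on line~1 rejecting, lines 2--6 return a sample of density $h_S/Z_h$ whether or not line~5 accepts; and line~1 itself, when it accepts, returns the same law. A short mixture computation $\mathbb{P}(W \in A) = Z_h \, \nu(A) + (1 - Z_h)\, \nu(A) = \nu(A)$, with $\nu$ the target distribution having density $h_S/Z_h$, then shows $\mathcal{A}'$ is locally correct, and the FTPS completes the proof.

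I expect the main obstacle to be the normalization bookkeeping in the line-5 branch. Unlike $\texttt{AR-split-once}$, whose recursive calls are to $\texttt{AR}$ and return draws from the reference measure, here the recursive calls return \emph{normalized} samples, so the line-5 acceptance probability picks up the factors $1/(Z_1 Z_2)$; the whole argument hinges on these factors cancelling exactly against the conditional density so that the output does not depend on $Z_1$ or $Z_2$. Verifying this cancellation, and confirming that the added line~1 contributes the target law rather than perturbing it, are the only genuinely new elements beyond the earlier proofs.
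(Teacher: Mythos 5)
Your proposal is correct and follows the same overall architecture as the paper's proof: termination from the preceding lemma, then local correctness via the Fundamental Theorem of Perfect Simulation after replacing the three recursive calls with oracles and decomposing the output over the three branches (accept at line 1, accept at line 5, fall through to line 6). The one substantive difference is that you track the normalization more carefully than the paper does. The paper writes $\mathbb{P}(e_2) = (1-Z_h)\int_{A} h_{S_1}h_{S_2}h_{S_1,S_2}\,d\mu = (1-Z_h)Z_h\,\mathbb{P}(Y\in A)$, implicitly treating the line-3 oracles as returning draws with unnormalized densities $h_{S_1}, h_{S_2}$; as probability distributions these must carry the factors $1/Z_1$ and $1/Z_2$, so the line-5 acceptance probability is really $Z_h/(Z_1 Z_2)$, not $Z_h$. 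Your version computes exactly this, verifies the cancellation in the conditional density, and then the mixture $Z_h\,\nu(A) + (1-Z_h)\,\nu(A) = \nu(A)$ closes the argument cleanly; the paper's final sum $Z_h + (1-Z_h)Z_h + (1-Z_h)^2 = 1$ happens to equal one as well, but its intermediate branch probabilities are only literally correct when $Z_1 Z_2 = 1$. Your bound $0 < Z_h \le Z_1 Z_2$ (from $h_{S_1,S_2}\le 1$), which guarantees the oracles over $S_1$ and $S_2$ are well defined, is a detail the paper omits. In short: same route, with the normalization bookkeeping done right.
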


\begin{proof}
The algorithm terminates with probability 1 by the previous lemma.  Hence by the FTPS, it is only necessary to show that the algorithm is locally correct.  

Let $\mathcal{A'}$ be the algorithm where lines 3 and 6 are replaced with oracles drawing from the correct distributions.  In particular, $Y$ is a draw from $\mu$ restricted to point space $S$.  For any measurable $B$, note 
\[
\int_{B} h_S(x) \ d\mu(x) = \frac{Z_h}{Z_h} \int_{B} h_S(x) \ d\mu(x) = Z_h \mathbb{P}(Y \in B).
\]

Fix a measurable set $A$, and let $W$ be the output of $\mathcal{A'}$.  Then the chance the output is in $A$ can be broken down into the probability of three events $e_1$, $e_2$, and $e_3$, representing acceptance at line 1, acceptance at line 5, or rejection and $Y \in A$ respectively.  That is, $\mathbb{P}(W \in A) = \mathbb{P}(e_1) + \mathbb{P}(e_2) + \mathbb{P}(e_3)$ where
\begin{align*}
e_1 &= \left(Z \in A, U_1 \leq h_S(Z)\right) \\
e_2 &= \left(U_1 > h_S(Z), X_1 \cup X_2 \in A, U_2 \leq h_S(X_1 \cup X_2)\right) \\
e_3 &= \left(U_1 > h_S(Z), U_2 > h_S(X_1 \cup X_2), Y \in A\right).
\end{align*}

As in the earlier proof of the correctness of acceptance rejection,
\begin{equation}
\mathbb{P}(e_1) = \mathbb{P}(Y \in A) Z_h.
\end{equation}

The chance that $Z$ is not accepted is 
\begin{equation}
\mathbb{P}(U > h_S(Z)) = 1 - \mathbb{P}(U \leq h_S(Z)) 
  = 1 - \int h_S(x) \ d\mu 
  = 1 - Z_h.
\end{equation}

Since $(X, U_1)$ and $(X_1, X_2, U_2)$ are independent:
\begin{align*}
\mathbb{P}(e_2) &= \mathbb{P}(U_1 > h_S(Z)) \mathbb{P}(X_1 \cup X_2 \in A, U_2 > h_S(X_1 \cup X_2)) \\
&= (1 - Z_h)\int_{x_1 \cup x_2 \in A} h_{S_1}(x_1) h_{S_2}(x_2) h_{S_1, S_2}(x_1, x_2) \ d\mu \\
&= (1 - Z_h)\int_{x_1 \cup x_2 \in A} h_{S}(x_1 \cup x_2) \ d\mu \\
&= (1 - Z_h)Z_h \mathbb{P}(Y \in A).
\end{align*}

Similarly, using independence of the pieces of the last term, the chance that we reject twice and then the recursive call lands in $A$ is 
\begin{equation}
\mathbb{P}(e_3) = (1 - Z_h)(1 - Z_h) \mathbb{P}(Y \in A).
\end{equation}

Putting these terms together gives
\begin{align*}
  \mathbb{P}(W \in A) &= \mathbb{P}(Y \in A)[Z_h + (1 - Z_h)Z_h + (1 - Z_h)^2]\\
&= \mathbb{P}(Y \in A)
\end{align*}
which completes the proof of correctness.
\end{proof}

In some cases, it is possible to know when $h$ is easy to sample from using AR, at which point, one can substitute basic AR in for line 1.  For instance, in \texttt{AR} for the Strauss process, acceptance occurs with probability 1 when there are no points in the draw.  Hence for $S$ small enough that $\lambda S < 1$, there is at least an $\exp(-1)$ chance of accepting.  The criterion for what is easy will vary from problem to problem.

\begin{programcode}{$\texttt{AR-stitch-base}(h, \mu, S)$}

1. For $(h, S)$ easy, draw $Z$ using $\texttt{AR}(h, \mu, S)$.  Return $Z$.

2. Partition $S$ into $S_1$ and $S_2$.

3a. Draw $X_1$ using $\texttt{AR-stitch-base}(h, \mu, S_1)$.

3b. Draw $X_2$ using $\texttt{AR-stitch-base}(h, \mu, S_2)$.

4. Draw $U_2$ uniformly from $[0, 1]$.

5. If $U_2 \leq h_{S_1, S_2}(X_1 \cup X_2)$ then return $X_1 \cup X_2$.

6. Else draw $Y$ from $\texttt{AR-stitch-base}(h, \mu, S)$.

\end{programcode}

\begin{lemma}
Let $Z_h$ be the integral of $h_S$ over $\mu$ on point space $S$.  If $Z_h > 0$, then $\texttt{AR-split-base}(h, \mu, S)$ terminates in finite time with probability 1 with output distributed as unnormalized density $h$ with respect to $\mu$ over $S$.
\end{lemma}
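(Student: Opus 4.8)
The plan is to apply the Fundamental Theorem of Perfect Simulation exactly as in the preceding lemmas, verifying (i) termination in finite time with probability 1 and (ii) local correctness. The key observation is that the algorithm $\texttt{AR-stitch-base}$ just defined differs from $\texttt{AR-stitch}$ only in its base case: on an easy region it calls $\texttt{AR}$ directly and returns, rather than attempting a single Bernoulli acceptance as in line 1 of $\texttt{AR-stitch}$. Everything from the partition at line 2 onward is structurally identical to $\texttt{AR-split-once}$, so the bulk of both the termination and correctness arguments can be transcribed from the earlier proofs, with the base case handled by the first lemma on $\texttt{AR}$.

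Before either part I would record the positivity fact that drives everything. Writing $Z_{h,S_i} = \int h_{S_i}\,d\mu$ for $i \in \{1,2\}$ and using the factorization $h_S = h_{S_1} h_{S_2} h_{S_1,S_2}$ together with $h_{S_1,S_2} \le 1$, one obtains
\[
0 < Z_h = \iint h_{S_1}(x_1) h_{S_2}(x_2) h_{S_1,S_2}(x_1,x_2)\, d[\mu \times \mu] \le Z_{h,S_1} Z_{h,S_2}.
\]
Hence $Z_{h,S_1} > 0$ and $Z_{h,S_2} > 0$, and the cross-term acceptance probability $p_3 = Z_h / (Z_{h,S_1} Z_{h,S_2})$ is strictly positive. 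Applying this at every node of the splitting tree gives, by a downward induction, that every subregion encountered — in particular every easy leaf — has a strictly positive normalizing constant.

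For termination I would induct on the depth of the splitting recursion, which is finite because repeated partitioning eventually produces easy regions. At a leaf the call is to $\texttt{AR}$ on a region with positive normalizing constant, so the first lemma gives termination in finite time with probability 1 (a geometric number of draws with positive parameter). For the inductive step, assume the two subcalls at lines 3a and 3b terminate in finite time with probability 1; then the loop formed by lines 5 and 6 repeats a geometric number of times with success probability $p_3 > 0$, each repetition invoking only finitely terminating subcalls, so the entire call terminates in finite time with probability 1.

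For local correctness I would form $\mathcal{A}'$ by replacing the recursive calls at lines 3a, 3b, and 6 with oracles drawing from the correct distributions. The base-case call to $\texttt{AR}$ is not a self-recursion but an already-proven-correct subroutine, so by the first lemma it returns an exact draw from $h_S$ and the easy case is immediately correct. In the non-base case the computation is identical to that for $\texttt{AR-split-once}$: with $X_1, X_2$ from the oracles for $h_{S_1}, h_{S_2}$ and $Y$ from the oracle for $h_S$, I would split $\mathbb{P}(W \in A)$ into the event of accepting $X_1 \cup X_2$ at line 5 and the event of rejecting and returning $Y$, then use independence and the factorization exactly as in the $\texttt{AR-split-once}$ lemma to obtain $\mathbb{P}(W \in A) = \mathbb{P}(Y \in A)$. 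The main obstacle is not this transcription but the two positivity points feeding it — confirming that each easy leaf has $Z > 0$ (so the first lemma applies) and that $p_3 > 0$ at each split (so the rejection loop is finite) — both of which follow from the propagation argument above; recognizing that the base-case $\texttt{AR}$ call must be treated as a correct subroutine rather than a recursion to be stripped out is what makes $\mathcal{A}'$ locally correct at the leaves.
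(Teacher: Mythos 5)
Your proposal is correct, and it follows the Fundamental Theorem of Perfect Simulation outline that the paper's one-line proof (``follows the same outline as for \texttt{AR-stitch}'') points to; but it is worth noting that your termination argument is, of necessity, \emph{not} a transcription of the \texttt{AR-stitch} termination proof, and you handled that divergence correctly. The paper's termination argument for \texttt{AR-stitch} is the elaborate $r(p)$ supremum induction, whose recursion $r(p) = 1 + (1-p)[r(p_1)+r(p_2)+(1-p_3)r(p)]$ is driven entirely by the line-1 Bernoulli acceptance that occurs at \emph{every} level of that algorithm; \texttt{AR-stitch-base} has no such probabilistic escape hatch, so that argument does not carry over verbatim. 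Your substitute --- induction on the depth of the splitting tree, with the base case discharged by the first lemma on \texttt{AR} and the inductive step by a geometric loop with success probability $p_3 > 0$ --- is the natural and correct adaptation, though it does require the (reasonable, and implicitly assumed by the paper) hypothesis that repeated partitioning reaches easy regions after finitely many splits; without some such assumption on the partitioning rule the lemma as stated is not even true. Two further points where your write-up is more careful than the source: the explicit positivity propagation $0 < Z_h \le Z_{h,S_1} Z_{h,S_2}$, which guarantees that every leaf has a positive normalizing constant so the \texttt{AR} lemma applies there, is genuinely needed and only implicit in the paper; and your identification of the cross-acceptance probability as $p_3 = Z_h/(Z_{h,S_1} Z_{h,S_2})$ with properly normalized densities for $X_1, X_2$ fixes a normalization sloppiness in the paper's own \texttt{AR-split-once} computation, where the acceptance probability is asserted to equal $Z_h$ outright. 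Finally, your observation that the leaf call to \texttt{AR} must be treated as an already-correct subroutine rather than as a recursion to be replaced by an oracle is exactly the right way to set up local correctness for the base-case version.
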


\begin{proof}
The proof follows the same outline as for $\texttt{AR-stitch}(h, \mu, S)$.
\end{proof}

\section{Stitching in practice}

To illustrate stitching in practice, consider the Strauss process and the Ising model.

\subsection{The Strauss process}

The Strauss density is determined by the parameters $S$, $\lambda$, $r$, and $\gamma$.

Given a process $X_1$ over $S_1$ and $X_2$ over $S_2$, $h_{S_1, S_2}(X_1, X_2)$ is $\gamma$ raised to the number of pairs of points $(x_1, x_2) \in X_1 \times X_2$ that are within distance $r$ of each other.  This gives the following algorithm.

\begin{programcode}{$\texttt{Strauss-AR-stitch-base}(\lambda, r, \gamma, S)$}

1. If $\lambda$ times the Lebesgue measure of $S$ is at most 5, draw $Z$ using AR, and return $Z$.

2. Partition $S$ into $S_1$ and $S_2$.

3a. Draw $X_1$ using $\texttt{Strauss-AR-stitch-base}(\lambda, r, \gamma, S_1)$.

3b. Draw $X_2$ using $\texttt{Strauss-AR-stitch-base}(\lambda, r, \gamma, S_2)$.

4. Draw $U_2$ uniformly from $[0, 1]$.  Let $c$ be the number of $a_i \in X_1$ and $b_j \in X_2$ such that $\text{dist}(a_i, b_j) \leq r$.

5. If $U_2 \leq \gamma^c$ then return $X_1 \cup X_2$.

6. Else return a draw from $\texttt{Strauss-AR-stitch-base}(\lambda, r, \gamma, S)$.

\end{programcode}

\subsection{The Ising model}

In the Ising model (and its extension, the Potts model), each vertex of a graph is given a label from a color set.  In the ferromagnetic model, edges of the graph are penalized by $\exp(-2\beta)$ (for $\beta > 0$ a constant) when the two edges of the set are colored differently.  The reference measure is uniform over all colorings of the vertices.

In other words, the density for a graph with edge set $E$ is
\begin{equation}
f(x) = \prod_{\{i, j\} \in E} [\exp(-2\beta)\mathbb{I}(x(i) \neq x(j)) + \mathbb{I}(x(i) = x(j))]
\end{equation}
with respect to counting measure over all colorings of the vertices of the graph.

A partition of a vertex set of a graph is called a \emph{cut}.  The stitching needs only check edges which \emph{cross the cut}, meaning that the endpoints of the edges lie in different halves of the cut.  The density (and reference measure) are determined by $\beta$, the edge set $E$, and the vertex set $V$.

\begin{programcode}{$\texttt{Ising-AR-stitch-base}(\beta, E, V)$}

1. If $V = \{v\}$, then choose $X(v)$ uniformly from the set of colors, return $X$.

2. Partition $V$ into $V_1$ and $V_2$.

3a. Draw $X_1$ using $\texttt{Ising-AR-stitch-base}(\beta, E, V_1)$.

3b. draw $X_2$ using $\texttt{Ising-AR-stitch-base}(\beta, E, V_2)$.

4. Draw $U_2$ uniformly from $[0, 1]$.  Let $c$ be the number of $i \in V_1$ and $j \in V_2$ such that $x(i) \neq x(j)$.

5. If $U_2 \leq \exp(-2\beta c)$ then return $(X_1, X_2)$.

6. Else return a draw from $\texttt{Ising-AR-stitch-base}(\beta, E, V)$.

\end{programcode}

\section{Numerical results}

For the Ising model, there are effective methods to perfectly sample for $\beta$ above and below the critical temperature~\cite{proppw1996}, so only the Strauss process is considered here.

In order to evaluate the running time behavior of various algorithms for generating from the Strauss process, timings were run on $S = [0, 1] \times [0, 1]$ for basic AR, the PRS method of~\cite{jerrumg2019}, and stitching.  AR is always exponential in $\lambda$, while PRS stays polynomial in $\lambda$ before moving to exponential in $\lambda$ past a certain threshold.  Acceptance Rejection Stitching (represented as ARS in the figure) is also exponential in $\lambda$, but at a much smaller rate.

\begin{figure}[b]
  \sidecaption
  \includegraphics[scale=0.35]{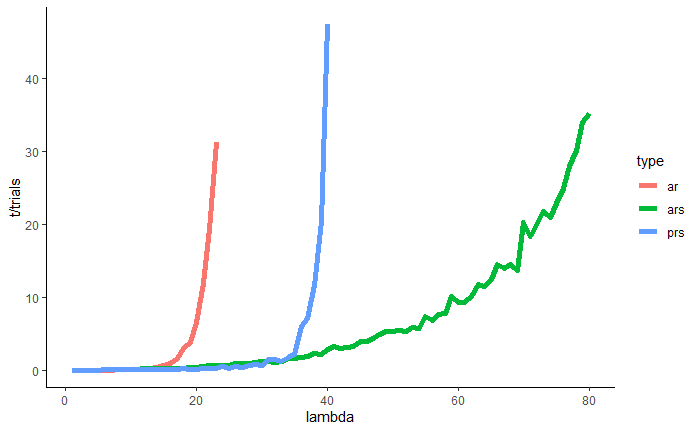}
  \caption{Timings of Acceptance Rejection, Partial Rejection Sampling, and Acceptance Rejection with Stitching for varying $\lambda$ over $S = [0, 1] \times [0, 1]$.}
\end{figure}

A plot of the log of the timings shows the exponential nature of the growth.  The original \texttt{AR} aalgorithm quickly becomes exponential in $\lambda$, while PRS stays polynomial until the critical point where it switches over to exponential behavior.  ARS also appears to be polynomial before turning exponential, but the slope of the log line is much lower than that of AR and PRS, allowing for sampling from much higher values of $\lambda$.

\begin{figure}[t]
  \sidecaption[t]
  \includegraphics[scale=0.35]{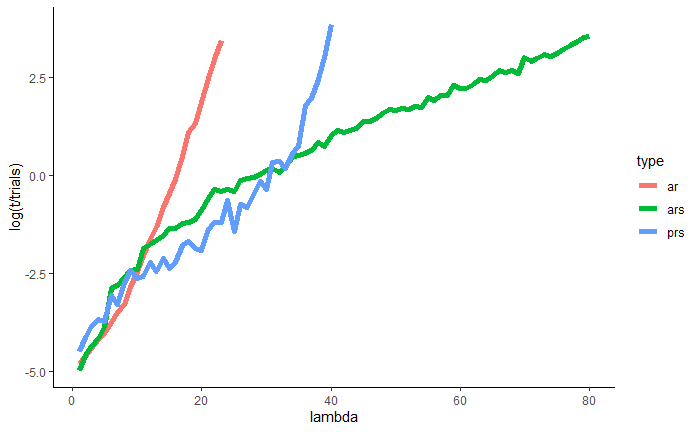}
  \caption{Log timings of Acceptance Rejection, Partial Rejection Sampling, and Acceptance Rejection with Stitching for varying $\lambda$ over $S = [0, 1] \times [0, 1]$.}
\end{figure}

\subsection{Code}

The code was written in R.  First the \texttt{tidyverse} library is needed.
\begin{verbatim}
library(tidyverse)
\end{verbatim}
Next, count the number of pairs of points within distance $r$ of each other.
\begin{programcode}{count\_r}
\begin{verbatim}
count_r <- function(points, r) {
  if (nrow(points) == 0) return(0)
  p <- points %>% mutate(k = 1)
  close <- 
    p %>%
    full_join(p, by = "k") %>%
    mutate(dist = sqrt((x.x - x.y)^2 + (y.x - y.y)^2)) %>%
    filter(dist > 0) %>%
    select(-k) %>%
    filter(dist < r)
  return(nrow(close) / 2)
}
\end{verbatim}
\end{programcode}

This allows implementation of basic AR for the Strauss process.
\begin{programcode}{strauss\_ar}
\begin{verbatim}
# Generate draws from the Strauss process
strauss_ar <- function(s_1, s_2, lambda, r, gamma) {
  # As noted earlier, use a repeat loop rather than 
  # recursion in practice
  repeat {
    n <- rpois(1, s_1 * s_2 * lambda)
    ppp <- tibble(
      x = runif(n) * s_1,
      y = runif(n) * s_2
    )
    if (runif(1) < gamma^counts_r(ppp, r))
      return(ppp)
  }
}
\end{verbatim}
\end{programcode}

For stitching, given two sets of points \texttt{p1} and \texttt{p2}, we need to count the number of pairs of points, one from \texttt{p1} and one from \texttt{p2} that lie within $r$ of each other.
\begin{programcode}{count\_r\_two}
\begin{verbatim}
count_r_two <- function(p1, p2, r) {
  if (nrow(p1) * nrow(p2) == 0) return(0)
  r2 <- p2 %>% mutate(k = 1)
  close <- 
    p1 %>%
    mutate(k = 1) %>%
    full_join(r2, by = "k") %>%
    mutate(dist = sqrt((x.x - x.y)^2 + (y.x - y.y)^2)) %>%
    filter(dist > 0) %>%
    select(-k) %>%
    filter(dist < r)
  return(nrow(close))
}
\end{verbatim}
\end{programcode}

Using \texttt{strauss\_ar} to draw samples when $\lambda$ times the size of $S$ is at most 5, we can draw from the Strauss process using \texttt{Strauss-AR-stitch-base}.
\begin{programcode}{strauss\_ars}
\begin{verbatim}
strauss_ars <- function(s_1, s_2, lambda, r, gamma) {
  if (s_1 * s_2 * lambda <= 5) {
    return(strauss_ar(s_1, s_2, lambda, r, gamma)) 
  }
  repeat {
    flip <- (s_1 < s_2)
    a_1 <- s_1 * (1 - flip) + s_2 * flip
    a_2 <- s_1 * flip + s_2 * (1 - flip)
    p1 <- strauss_ars(a_1 / 2, a_2, lambda, r, gamma) 
    temp <- strauss_ars(a_1 / 2, a_2, lambda, r, gamma) 
    ifelse (nrow(temp) == 0, 
            p2 <- temp, 
            p2 <- temp %>% mutate(x = x + a_1 / 2)) 
    if (nrow(p1) * nrow(p2) == 0)
      c <- 0
    else {
      strip_one <- p1 %>% filter(x > (a_1 / 2) - r)
      strip_two <- p2 %>% filter(x < (a_1  / 2) + r)
      c <- count_r_two(strip_one, strip_two, r)
    }
    if (runif(1) < gamma^c) {
      points <- full_join(p1, p2, by = c("x", "y"))
      ifelse (flip, 
              return(points %>% select(x = y, y = x)),
              return(points))
    } 
  } 
}
\end{verbatim}
\end{programcode}

For PRS, it is necessary to locate the \emph{bad points}, pairs of points that are within $r$ distance of each other and fail a $\gamma$ check.

\begin{programcode}{find\_pairs}
\begin{verbatim}
find_pairs <- function(ppp, r, gamma) {
  n <- nrow(ppp)
  if (n == 0) return(tibble(x = NULL, y = NULL))
  ppp2 <- ppp %>% mutate(k = 1, id = 1:n)  
  pairs <- ppp2 %>% 
    full_join(ppp2, by = "k") %>% 
    filter(id.x < id.y) %>%
    mutate(dist = sqrt((x.x - x.y)^2 + (y.x - y.y)^2))
  pairsu <- pairs %>% mutate(u = runif(nrow(pairs)))
  cpairs <- pairsu %>% filter(dist < r) %>% filter(u > gamma)
  left <- cpairs %>% select(x = x.x, y = y.x)
  right <- cpairs %>% select(x = x.y, y = y.y)
  return(union(left, right))
}
\end{verbatim}
\end{programcode}

This function generates new points around the bad points.
\begin{programcode}{new\_points}
\begin{verbatim}
new_points <- function(bad, s_1, s_2, lambda, r) {
  n <- rpois(1, s_1 * s_2 * lambda)
  if (n == 0) return(tibble(x = NULL, y = NULL))
  poss <- tibble(
    x = runif(n) * s_1,
    y = runif(n) * s_2,
    k = 1,
    id = 1:n
  )
  pairs <- poss %>% 
    full_join(bad %>% mutate(k = 1, id = 1:nrow(bad)), by = "k") %>%
    mutate(dist = sqrt((x.x - x.y)^2 + (y.x - y.y)^2)) %>%
    filter(dist < r) %>%
    select(x = x.x, y = y.x)
  return(distinct(pairs))
}
\end{verbatim}
\end{programcode}

With this, partial rejection sampling for Strauss can be implemented.
\begin{programcode}{strauss\_prs}
\begin{verbatim}
strauss_prs <- function(s_1, s_2, lambda, r, gamma) {
  n <- rpois(1, s_1 * s_2 * lambda)
  ppp <- tibble(
    x = runif(n) * s_1,
    y = runif(n) * s_2
  )
  bad <- find_pairs(ppp, r, gamma)
  while (nrow(bad) > 0) {
    np <- new_points(bad, s_1, s_2, lambda, r) 
    ppp <- union(setdiff(ppp, bad), np)
    bad <- find_pairs(ppp, r, gamma)
  }
  return(ppp)
}
\end{verbatim}
\end{programcode}

\section{Conclusion}

Stitching is a simple to implement algorithm that has an exponential running time with a rate far lower than either acceptance rejection or various local methods.  This enables its use in generating from the Strauss process over parameter values and spaces that were previously not possible in a reasonable amount of time.

%%%%%%%%%%%%%%%%%%%%%%%% referenc.tex %%%%%%%%%%%%%%%%%%%%%%%%%%%%%%
% sample references
% %
% Use this file as a template for your own input.
%
%%%%%%%%%%%%%%%%%%%%%%%% Springer-Verlag %%%%%%%%%%%%%%%%%%%%%%%%%%
%
% BibTeX users please use
% \bibliographystyle{}
% \bibliography{}
%

\end{document}